\newtheorem{theorem}{Theorem}[section]
\newtheorem{lemma}{Lemma}[section]
\newtheorem{corollary}{Corollary}[section]
\newtheorem{definition}{Definition}[section]
\newtheorem{proposition}{Proposition}[section]
\newtheorem{example}{Example}[section]
\title{Hamiltonian Systems as an Example of Invariant Measure}
\author{Lopes, Daniel F. \\ Universidade Federal de Minas Gerais}
\date{September 2025}
\begin{document}

\maketitle

\begin{abstract}
Hamiltonian systems are a classical example in the ergodic theory of flows with an invariant measure. In this matter, we present a brief introduction to measure theory and prove the Poincaré recurrence theorem to present the conditions for a system to be conservative. In the following, we discuss the Hamiltonian differential equations, vector fields, and their respective flows as an example of this invariance.
\end{abstract}

\section{Basics of Measure Theory}

This first chapter was based on definitions and theorems of basic measure theory that can be found on \cite{Krerley}, Chapter 1. We will present the foundation for the development of the next chapters.

\subsection{First definitions}
We first construct the definitions and properties of the measurable space and the measurement space.
 
\begin{definition} Let $M$ be a set. A subset algebra of M is a family $\mathcal{B}$ of subsets that contain $M$ and is closed under complement and union:
\end{definition}

\begin{enumerate}
    \item $M \subset \mathcal{B}$
    \item $A \in \mathcal{B} \Rightarrow A^{c} \in \mathcal{B}$
    \item $A \in \mathcal{B}$ and $B \in \mathcal{B} \Rightarrow A \cup B \in \mathcal{B}$
\end{enumerate}

 Clearly, the intersection $A \cap B$ of any $A,B \in \mathcal{B}$ is also contained in $\mathcal{B}$.
 
\begin{definition}
    A subset $\sigma$-algebra $\mathcal{B}$ of $M$ is an algebra that is also closed over the enumerable union.
\end{definition}

 Clearly, $\mathcal{B}$ is also closed to enumerable intersections.

\begin{definition}
    A measurable space is a par $(M,\mathcal{B})$ with $M$ being a set and $\mathcal{B}$ a subset $\sigma$-algebra of $M$. The elements of $\mathcal{B}$ are called \textit{measurable sets}.
\end{definition}

The intersection $\mathcal{B}$ of a nonempty family $\{B_i, i \in \mathcal{I}\}$ ($\mathcal{I}$ is an arbitrary set) of $\sigma$-algebras is also a $\sigma$-algebra. To construct the \textit{generated $\sigma$ algebra} of a family $\mathcal{E}$ of subsets, we imagine the intersection of all $\sigma$-algebras that contain $\mathcal{E}$. That is the smallest $\sigma$-algebra that contains $\mathcal{E}$.

\begin{definition}
A generated $\sigma$-algebra by $\mathcal{E}$ - a family of subsets of $M$ - is the smallest $\sigma$-algebra that contains $\mathcal{E}$.
\end{definition} 

 If M is a set and $\tau$ is the family of open subsets of $M$, we have $(M,\tau)$ as a topological space and a \textit{Borel $\sigma$-algebra}:

 \begin{definition}
 Let $(M,\tau)$ be a topological space. Then the Borel $\sigma$-algebra of M is the $\sigma$-algebra generated by $\tau$.
\end{definition}

 We now define the concept of measure and measurement space:

\begin{definition}
   A measure in a measurable space $(M,\mathcal{B})$ is a function $\mu : \mathcal{B} \to [0,\infty]$ (notice that, at principle, an infinite measure is included) that satisfies: 
\end{definition}

\begin{enumerate}
    \item $\mu(\emptyset) = 0$
    \item $\mu(\bigcup_{j=1}^\infty A_j) = \sum_{j=1}^\infty \mu(A_j)$ with $A_i \cap A_k = \emptyset, i \neq k$ ($\sigma$-aditivity)
\end{enumerate}

 The triple $(M,\mathcal{B},\mu)$ is called a \textit{measurement space}.\\
\indent A function $\mu : \mathcal{B} \to [0,\infty]$ is finitly aditive if:\\
\begin{center}
    $\mu(\displaystyle \bigcup_{j=1}^N A_j) = \displaystyle \sum_{j=1}^N \mu(A_j)$
\end{center}

\noindent for $A_1,\dots,A_N \in \mathcal{B}$ two by to disjoints.\\

 We now give a condition to $\sigma$-aditivity:\\

\begin{theorem} 
Let $\mathcal{B_0}$ be an algebra and $\mu_0 : \mathcal{B_0} \to [0,\infty]$ a finitely additive function with $\mu_0(M) = 1$, if:
\begin{center}
    $\displaystyle \lim_{n \to \infty} \mu(\displaystyle \bigcup_{j=1}^n A_j) = 0$
\end{center}
 for all sequences $A_1 \supset \dots\supset A_j \supset \dots$ of measurable sets such that $\bigcap_{j=1}^\infty A_j = \emptyset$, then $\mu_0$ is $\sigma$-additive.
\end{theorem}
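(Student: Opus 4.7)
The claim is really a standard "continuity from above implies $\sigma$-additivity" result, so the strategy is to take an arbitrary countable family of pairwise disjoint sets in $\mathcal{B}_0$ whose union also lies in $\mathcal{B}_0$, express the difference between the total measure and the partial sums as the measure of a nicely decreasing sequence with empty intersection, and then invoke the hypothesis.

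First I would fix a sequence $(E_j)_{j\ge 1} \subset \mathcal{B}_0$ of pairwise disjoint sets whose union $E := \bigcup_{j=1}^\infty E_j$ also lies in $\mathcal{B}_0$ (this last assumption is needed because $\mathcal{B}_0$ is only an algebra, not a $\sigma$-algebra). Then I would define $B_n := E \setminus \bigcup_{j=1}^n E_j$; since $\mathcal{B}_0$ is closed under complements and finite unions, each $B_n$ belongs to $\mathcal{B}_0$. By construction $B_1 \supset B_2 \supset \cdots$, and pairwise disjointness of the $E_j$ together with $E = \bigcup_j E_j$ gives $\bigcap_n B_n = \emptyset$.

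Next, finite additivity yields the decomposition
\[
\mu_0(E) = \mu_0\!\left(\bigcup_{j=1}^n E_j\right) + \mu_0(B_n) = \sum_{j=1}^n \mu_0(E_j) + \mu_0(B_n),
\]
where subtraction/comparison is legitimate because $\mu_0(M) = 1$ forces every value to be finite. Applying the hypothesis to $(B_n)$ gives $\mu_0(B_n) \to 0$, so letting $n \to \infty$ produces $\mu_0(E) = \sum_{j=1}^\infty \mu_0(E_j)$, which is $\sigma$-additivity.

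\textbf{Main obstacle.} The only delicate point is reading the hypothesis correctly: as printed it involves $\bigcup_{j=1}^n A_j$, but for a \emph{decreasing} sequence the natural and useful statement is $\lim_n \mu_0(A_n) = 0$ (equivalently $\lim_n \mu_0(\bigcap_{j=1}^n A_j) = 0$), and that is what the argument really needs. Beyond that, the remaining care is purely bookkeeping — making sure each set manipulated really lies in $\mathcal{B}_0$ (which only holds because $\mathcal{B}_0$ is an algebra, hence stable under finite, not countable, operations) and that finiteness of $\mu_0$ lets me rearrange the identity above without running into $\infty - \infty$.
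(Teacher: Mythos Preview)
The paper does not actually supply a proof of this theorem: it is stated in Section~1.1 as background material drawn from the reference \cite{Krerley} and the text moves directly on to the Lebesgue measure construction. So there is no ``paper's own proof'' to compare against.

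Judged on its own merits, your argument is correct and is exactly the standard one. Fixing pairwise disjoint $E_j$ with $E=\bigcup_j E_j\in\mathcal{B}_0$, setting $B_n=E\setminus\bigcup_{j\le n}E_j$, checking $B_n\downarrow\emptyset$, writing $\mu_0(E)=\sum_{j\le n}\mu_0(E_j)+\mu_0(B_n)$ by finite additivity, and sending $n\to\infty$ using the hypothesis is precisely how this is done. Your remark that $\mu_0(M)=1$ is what makes the subtraction legitimate is on point, as is your insistence that $E$ must be assumed to lie in $\mathcal{B}_0$ because an algebra need not be closed under countable unions.

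Your diagnosis of the ``main obstacle'' is also right: the displayed hypothesis with $\bigcup_{j=1}^n A_j$ is almost certainly a typo, since for a decreasing sequence $A_1\supset A_2\supset\cdots$ one has $\bigcup_{j\le n}A_j=A_1$ for every $n$, and the statement as literally written would then require $\mu_0(A_1)=0$, which is far too strong. The intended hypothesis is $\lim_n\mu_0(A_n)=0$ (equivalently $\lim_n\mu_0(\bigcap_{j\le n}A_j)=0$), and that is what your proof uses.
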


\subsection{Lebesgue Measure}
 We now present a theorem on measure extension in order to construct Lesbegue measure, one of the most important measures in ergodic theory.

\begin{theorem}
Let $\mathcal{B}_0$ be a subset algebra of $M$ and $\mathcal{B}$ the $\sigma$-algebra generated by $\mathcal{B}_0$. Let 
$\mu_0 : \mathcal{B}_0 \to [0,\infty]$ be a finite additive function. Then, there is an unique finite additive function $\mu : \mathcal{B} \to [0,\infty]$ which is an extension of $\mu_0$ (in other words, $\mu$ restricted to $\mathcal{B}_0$ coincide with $\mu_0$).
\end{theorem}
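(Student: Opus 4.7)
The plan is to use Carathéodory's classical outer measure construction, which is the standard route for this Hahn-type extension theorem. I will first assume that $\mu_0$ is actually $\sigma$-additive on $\mathcal{B}_0$ (which is the intended hypothesis, provided by the criterion in the previous theorem under the normalization $\mu_0(M) = 1$); pure finite additivity is not enough to produce a countably additive extension.

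First I would build an outer measure on $\mathcal{P}(M)$ by setting
\begin{equation*}
\mu^*(A) \;=\; \inf\!\left\{\,\sum_{j=1}^\infty \mu_0(E_j) \;:\; A \subseteq \bigcup_{j=1}^\infty E_j,\; E_j \in \mathcal{B}_0\,\right\}.
\end{equation*}
A short routine verification shows $\mu^*(\emptyset)=0$, monotonicity, and countable subadditivity (the latter via an $\varepsilon/2^j$ argument). Next I would introduce Carathéodory's measurability criterion: a set $A \subseteq M$ is $\mu^*$-measurable when $\mu^*(E) = \mu^*(E \cap A) + \mu^*(E \cap A^c)$ for every test set $E \subseteq M$. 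A standard (but careful) argument establishes that the class $\mathcal{M}$ of $\mu^*$-measurable sets is a $\sigma$-algebra and that $\mu^*$ restricted to $\mathcal{M}$ is a measure.

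The heart of the proof is the two compatibility checks. I would show (i) every $A \in \mathcal{B}_0$ is $\mu^*$-measurable, by taking an arbitrary covering of a test set $E$ by elements of $\mathcal{B}_0$ and splitting each cover element with $A$ and $A^c$ using finite additivity of $\mu_0$; and (ii) $\mu^*(A) = \mu_0(A)$ for $A \in \mathcal{B}_0$, where the inequality $\mu^*(A) \le \mu_0(A)$ is immediate and the reverse inequality is the place where $\sigma$-additivity of $\mu_0$ on $\mathcal{B}_0$ (not merely finite additivity) is essential: one writes $A \subseteq \bigcup_j E_j$ with $E_j \in \mathcal{B}_0$, disjointifies inside $\mathcal{B}_0$, and then invokes countable additivity of $\mu_0$ on the disjoint family $\{A \cap E_j'\}$. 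Once (i) and (ii) hold, since $\mathcal{M}$ is a $\sigma$-algebra containing $\mathcal{B}_0$ and $\mathcal{B}$ is the smallest such, we get $\mathcal{B} \subseteq \mathcal{M}$, and $\mu := \mu^*|_{\mathcal{B}}$ is the desired extension.

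For uniqueness I would argue by a monotone class / Dynkin system argument, using finiteness ($\mu_0(M)=1$) in a crucial way. Given a second extension $\nu$, consider $\mathcal{D} = \{B \in \mathcal{B} : \mu(B) = \nu(B)\}$. Finiteness ensures $\mathcal{D}$ is closed under proper differences (so that subtracting measures makes sense) and under monotone increasing unions by continuity from below; hence $\mathcal{D}$ is a $\lambda$-system containing the $\pi$-system $\mathcal{B}_0$, and Dynkin's $\pi$-$\lambda$ theorem forces $\mathcal{D} = \mathcal{B}$. The main obstacle, in my view, is the verification step (i) above: turning an arbitrary countable cover of a test set into a measurable partition compatible with $A$ while keeping $\mu_0$-mass bookkeeping under control is the technical crux, and it is also the step that most directly exploits that $\mathcal{B}_0$ is an algebra rather than a mere collection.
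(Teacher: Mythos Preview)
The paper does not actually supply a proof of this extension theorem; it is stated as a black-box result (imported from \cite{Krerley}) and immediately used to construct Lebesgue measure, so there is no in-paper argument to compare your proposal against.

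On its own merits, your Carath\'eodory outer-measure route is the standard and correct proof of the Hahn--Kolmogorov extension theorem, and your diagnosis of the statement is apt: as literally written (``finite additive'' for both $\mu_0$ and the extension $\mu$) the theorem is false in both directions---a merely finitely additive $\mu_0$ need not extend to a measure, and uniqueness of a finitely additive extension on $\sigma(\mathcal{B}_0)$ also fails without countable additivity. Reading the paper charitably, the intended hypothesis is that $\mu_0$ is $\sigma$-additive on $\mathcal{B}_0$ (the preceding Theorem~1.1 gives exactly the criterion for this under $\mu_0(M)=1$), and the intended conclusion is a genuine measure on $\mathcal{B}$; your proof sketch handles precisely that corrected statement. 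The only point I would sharpen is in your uniqueness paragraph: you invoke $\mu_0(M)=1$, but that normalization appears in Theorem~1.1, not in the present statement. You should either add $\sigma$-finiteness as an explicit hypothesis or at least flag that without it uniqueness can fail, so that the scope of your Dynkin argument is clear.
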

 
 We first define the Lebesgue measure $\mu$ on the cube $M = [0,1]^d$, $d\geq1$, the following way: we call a \textit{rectangle} in $M$ any subset on the form $R = I_1\times\dots\times I_d$, each $I_j$ an interval, and define:\\

\begin{center}
    $\mu_0(R) = |I_1|\times\dots\times |I_d|$
\end{center}

 Then, consider the algebra $\mathcal{B}_0$ of $M$ in the form $B = R_1\cup\dots\cup R_N$, where $R_1,\dots,R_1$ two by two disjoints rectangles and define:
\begin{center}
    $\mu(B) = \mu_0(R_1) + \dots + \mu_0(R_N)$
\end{center}

 \noindent for all $B$ in this algebra. The Lebesgue measure on $M$ is the $\sigma$-algebra extension of $\mu_0$ generated by $\mathcal{B}$, which coincides with the Borel $\sigma$-algebra of $M$.\\
 Generalizing, we define the Lebesgue measure on $\mathbb{R}^{d}$ decomposing the space on unitary cubes:

\begin{center}
    $\mathbb{R}^{d} = \displaystyle \bigcup_{m_1 \in \mathbb{Z}}  \displaystyle \bigcup_{m_d \in \mathbb{Z}} [m_1,m_1 +1)\times\dots\times [m_d, m_d + 1)$
\end{center}

\noindent and defining, for a measure subset $E$:
\begin{center}
    $\mu(E) = \displaystyle \sum_{m_1 \in \mathbb{Z}} \dots \displaystyle \sum_{m_d \in \mathbb{Z}} [m_1,m_1 +1)\times\dots\times [m_d, m_d + 1)$ 
\end{center}

\indent Intuitively, the Lebesgue measure is what we commonly call volume, area or distance, depending on the given context.

\subsection{Fundamental Theorems}
We now present some fundamental theorems on measure theory and the notion of integration of a function with respect to a measure. Let $(M,\mathcal{B},\mu)$ be a measurement space.\\

\begin{definition}
    A measure in a measurable space $(M,\mathcal{B})$ is a function $\mu : \mathcal{B} \to [0,\infty]$ (notice that, at principle, an infinite measure is included) that satisfies:
\end{definition}

\begin{proposition}
Let $f_1,f_2$ be measurable functions and $c_1,c_2 \in \mathbb{R}$. Then are also measurable the following functions:
\begin{enumerate}
    \item $(c_1 f_1 + c_2 f_2)(x) = c_1 f_1(x) + c_2 f_2(x)$
    \item $(f_1 f_2)(x) = f_1(x)f_2(x)$
    \item $max\{f_1,f_2\}(x) = max\{f_1(x),f_2(x)\}$\\
\end{enumerate}
\end{proposition}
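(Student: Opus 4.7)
The plan is to work from the standard characterization that a real-valued function $f$ is measurable precisely when $\{x : f(x) > c\} \in \mathcal{B}$ for every $c \in \mathbb{R}$ (equivalently, preimages of any generating family of the Borel $\sigma$-algebra of $\mathbb{R}$ are measurable). Since the half-lines $(c,\infty)$ generate the Borel $\sigma$-algebra of $\mathbb{R}$, checking this one family suffices, and this reduces each part to a set-theoretic identity that expresses the relevant preimage using the measurable sets $\{f_i > c\}$ together with countably many unions, intersections, and complements.

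For (3) I would start with the maximum, since it is the cleanest: the identity
\[
\{x : \max\{f_1, f_2\}(x) > c\} \;=\; \{x : f_1(x) > c\} \cup \{x : f_2(x) > c\}
\]
shows the preimage is a finite union of measurable sets, hence measurable. For (1) I would first handle the scalar multiple $cf$ by a direct case split on the sign of $c$, and then prove that a sum $g_1 + g_2$ of measurable functions is measurable via the density of $\mathbb{Q}$ in $\mathbb{R}$:
\[
\{x : g_1(x) + g_2(x) > c\} \;=\; \bigcup_{q \in \mathbb{Q}} \bigl(\{g_1 > q\} \cap \{g_2 > c - q\}\bigr),
\]
which is a countable union of intersections of measurable sets, hence lies in $\mathcal{B}$ by $\sigma$-algebra closure. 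Combining the two pieces gives measurability of $c_1 f_1 + c_2 f_2$.

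For (2) I would use the polarization-style identity
\[
f_1 f_2 \;=\; \tfrac{1}{4}\bigl((f_1 + f_2)^2 - (f_1 - f_2)^2\bigr),
\]
which reduces the product to sums, differences, scalar multiples, and squares of measurable functions. The measurability of $f^2$ for measurable $f$ follows from
\[
\{x : f(x)^2 > c\} \;=\; \{f > \sqrt{c}\} \cup \{f < -\sqrt{c}\} \quad (c \geq 0),
\]
with the whole space when $c < 0$. Together with part (1), this closes the argument for products.

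The only mildly delicate step is the sum in part (1): one has to notice that no finite identity expresses $\{g_1 + g_2 > c\}$ directly, and the countable union indexed by $\mathbb{Q}$ is the standard remedy that genuinely uses $\sigma$-algebra closure rather than mere algebra closure. Everything else is a routine verification of set identities, and the proposition follows by stringing these pieces together in the order (3), scalar multiple, sum, square, product.
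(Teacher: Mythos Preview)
Your argument is correct and is the standard textbook route: characterize measurability via the level sets $\{f>c\}$, handle $\max$ by a finite union, handle scalar multiples by a sign case split, handle sums by the countable-union-over-$\mathbb{Q}$ trick, and reduce products to sums and squares via polarization. Each set identity you wrote is valid, and the only place where $\sigma$-algebra closure (as opposed to mere algebra closure) is genuinely needed is exactly where you flagged it, namely the sum.

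There is nothing to compare against: the paper states this proposition without proof and moves directly on to the definition of the integral of a simple function. Your write-up therefore supplies what the paper omits, and it does so along the lines one would find in any standard reference such as the one the paper cites.
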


\begin{definition}
Let s be a function. Then the integral of s with respect to $\mu$ is given by;
\begin{center}
    $\displaystyle \int\ s\, d\mu = \displaystyle \sum_{j=1}^k \alpha_j \mu(A_j)$
\end{center}

\noindent with sets $\alpha_1,\dots, \alpha_k \in \mathbb{R}$, and $A_1,\dots,A_k \in \mathcal{B}$ two by two disjoints such that:
\[
s = \sum_{j=1}^k \alpha_j \chi_{A_j}, \quad 
\chi_A(x) = \begin{cases}
1, & x \in A, \\
0, & x \notin A
\end{cases}
\]
\end{definition}

\begin{theorem}
Let $f : M \to [-\infty,\infty]$ be a measurable function. Then there is a sequence $s_1, s_2,\dots$ of measurable functions such that:
\begin{center}
    $\displaystyle \lim_{k \to \infty} s_k(x) = f(x)$ for all $x \in M$
\end{center}
\end{theorem}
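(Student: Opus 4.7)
The plan is to produce the standard dyadic approximation of a measurable function by simple functions. The first step is a reduction: write $f = f^{+} - f^{-}$, where $f^{+}(x) = \max\{f(x), 0\}$ and $f^{-}(x) = \max\{-f(x), 0\}$. Both $f^{+}$ and $f^{-}$ are non-negative and measurable (by the proposition on max of measurable functions together with the observation that $-f$ is measurable when $f$ is). If I can approximate each of these by a sequence of simple measurable functions, then the difference of the two approximating sequences will approximate $f$ pointwise, and by the proposition each term of this difference is measurable. Thus it suffices to prove the statement for non-negative measurable $f : M \to [0,\infty]$.

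For such $f$, I would define, for each $k \geq 1$, the dyadic partition of the range. Concretely, set
\[
A_{k,j} = f^{-1}\!\left(\left[\tfrac{j}{2^{k}},\tfrac{j+1}{2^{k}}\right)\right),\qquad j = 0,1,\dots,k\cdot 2^{k}-1,
\]
and $B_{k} = f^{-1}([k,\infty])$. Each of these sets lies in $\mathcal{B}$ because $f$ is measurable and the preimages of Borel sets are measurable. Then define
\[
s_{k}(x) = \sum_{j=0}^{k\cdot 2^{k}-1} \tfrac{j}{2^{k}}\,\chi_{A_{k,j}}(x) + k\,\chi_{B_{k}}(x).
\]
This is a finite linear combination of characteristic functions of measurable, pairwise disjoint sets, so $s_{k}$ is a simple measurable function in the sense of the previous definition.

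The remaining step is pointwise convergence $s_{k}(x) \to f(x)$. I would split into two cases. If $f(x) < \infty$, then for every $k$ with $k > f(x)$ there is a unique $j$ with $x \in A_{k,j}$, and the definition of $A_{k,j}$ forces $0 \leq f(x) - s_{k}(x) < 2^{-k}$; letting $k \to \infty$ gives $s_{k}(x) \to f(x)$. If $f(x) = \infty$, then $x \in B_{k}$ for every $k$, so $s_{k}(x) = k \to \infty = f(x)$. Combining with the sign-decomposition described above yields the claim for general measurable $f$.

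The main technical point, and the only thing I expect to require care rather than pure routine, is the bookkeeping at the boundary values: making sure the slab $B_{k} = f^{-1}([k,\infty])$ is handled correctly so that the construction works both when $f$ is finite (where the error bound $2^{-k}$ drives convergence) and when $f$ takes the value $+\infty$ (where $s_{k}(x) = k$ diverges at the correct rate). Once the intervals in the range are chosen to be half-open and the top slab extended up to $+\infty$, the two cases merge cleanly. No appeal beyond the measurability of preimages and the proposition on algebraic operations preserving measurability is needed.
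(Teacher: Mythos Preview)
The paper states this theorem without proof, so there is nothing to compare against; your argument is the standard dyadic simple-function approximation and it is correct. It is worth noting that the theorem as printed only asks for \emph{measurable} approximants (which would be trivial: take $s_k = f$), but the surrounding context --- the integral is defined immediately afterward as a limit of integrals of simple functions --- makes clear that simple functions are intended, and that is exactly what your construction delivers. The one point to keep tidy in the sign decomposition is that $s_k^{+} - s_k^{-}$ is always a well-defined finite simple function and that at each $x$ at most one of $f^{+}(x), f^{-}(x)$ is nonzero, so no $\infty - \infty$ indeterminacy arises in the limit; you handle this implicitly, and it goes through.
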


\begin{definition}
Let $f : M \to [0,\infty]$ be a non-negative measurable function, so:
\begin{center}
    $\displaystyle \int\ f\, d\mu = \displaystyle \lim_{n \to \infty} \displaystyle \int\ s_n\, d\mu$
\end{center}
 where $s_1 \leq s_2 \leq \dots$ is a sequence of crescent functions such that:
\begin{center}
    $\displaystyle \lim_{n \to \infty} s_n(x) = f(x)$ for all $x \in M$
\end{center}
\end{definition}

\begin{definition}
We say that a function is integrable if it is measurable and has finite integral.
\end{definition}

 Given a measurable function $f : M \to \mathbb{R}$ and a measurable set $E$, we define the integral of $f$ over $E$ by:
\begin{center}
    $\displaystyle \int_{E} f\, d\mu = \displaystyle \int f \chi_E\, d\mu$
\end{center}

 An important notion is that a property is valid \textit{in $\mu$-almost every point} if it is valid in all $M$ except possibly in a null measurable set.\\
 Given a measurable subset $A$ of $\mathbb{R}^{d}$, we say that $a \in A$ is a \textit{density point} of $A$ if this set fulfills the most part of any small neighborhood  of a:
\begin{center}
    $\displaystyle \lim_{\varepsilon \to 0} \dfrac{\mu(B(a,\varepsilon) \cap A)}{\mu(B(a,\varepsilon))} = 1$
\end{center}

\begin{theorem}
  Let $A$ be measurable subset of $\mathbb{R}^{d}$ with $\mu(A) > 0$. Then $\mu$-almost every point $a \in A$ is a density point of $A$.
\end{theorem}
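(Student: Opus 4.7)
The plan is to show, for each $\alpha \in (0,1)$, that the set
$$A_\alpha = \left\{a \in A : \liminf_{\varepsilon\to 0} \frac{\mu(B(a,\varepsilon)\cap A)}{\mu(B(a,\varepsilon))} < \alpha \right\}$$
has Lebesgue measure zero. Once this is established, taking $\alpha = 1 - 1/n$ and forming a countable union handles the $\liminf$, while the $\limsup$ is automatically bounded above by $1$, so the density equals $1$ at $\mu$-almost every point of $A$.

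First I would reduce to the case $\mu(A) < \infty$ by intersecting $A$ with the balls $B(0,R)$ and using countable additivity as $R \to \infty$. The heart of the proof is then a Vitali-type covering argument. By outer regularity of Lebesgue measure on $\mathbb{R}^d$, for any $\eta > 0$ there is an open set $U \supset A_\alpha$ with $\mu(U) \le \mu(A_\alpha) + \eta$. For every $a \in A_\alpha$ and every $\delta > 0$, the definition of $A_\alpha$ produces some $\varepsilon \in (0,\delta)$ with $B(a,\varepsilon) \subset U$ and $\mu(B(a,\varepsilon) \cap A) < \alpha\, \mu(B(a,\varepsilon))$. The resulting collection $\mathcal{F}$ of balls is a Vitali cover of $A_\alpha$: every point of $A_\alpha$ lies in balls of $\mathcal{F}$ of arbitrarily small radius.

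Next I would invoke the Vitali covering theorem to extract a countable disjoint subfamily $\{B_j\}_{j\ge 1} \subset \mathcal{F}$ such that $\mu(A_\alpha \setminus \bigcup_j B_j) = 0$. Since $A_\alpha \subset A$ and the $B_j$ are pairwise disjoint and contained in $U$,
$$\mu(A_\alpha) \le \sum_j \mu(B_j \cap A) < \alpha \sum_j \mu(B_j) \le \alpha\, \mu(U) \le \alpha\,(\mu(A_\alpha) + \eta).$$
Letting $\eta \to 0$ gives $\mu(A_\alpha) \le \alpha\, \mu(A_\alpha)$, and since $\alpha < 1$ and $\mu(A_\alpha) < \infty$, this forces $\mu(A_\alpha) = 0$.

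The main obstacle is the Vitali covering theorem itself, which rests on a geometric selection argument: at each stage one picks a ball whose radius is nearly maximal among those disjoint from the previously chosen ones, and then checks that the $5$-times enlargements of the selected balls cover the residue, so that iterating the construction exhausts $A_\alpha$ up to a null set. Everything else, namely outer regularity of Lebesgue measure and the countable manipulations turning the $\liminf$ statement into a full limit, is routine.
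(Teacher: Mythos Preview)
The paper does not actually prove this theorem: it is listed in Section~1.3 among the ``fundamental theorems'' (alongside the Dominated Convergence Theorem) as a background result taken from \cite{Krerley}, with no argument supplied. So there is no proof in the paper to compare yours against.

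That said, your proposal is the standard Vitali-covering proof of the Lebesgue density theorem and is correct in outline. The chain of inequalities
\[
\mu(A_\alpha) \le \sum_j \mu(B_j\cap A) < \alpha \sum_j \mu(B_j) \le \alpha\,\mu(U) \le \alpha\bigl(\mu(A_\alpha)+\eta\bigr)
\]
is right, and the reduction to $\mu(A_\alpha)=0$ for a sequence $\alpha\uparrow 1$ is the usual way to finish. One small technicality you glide over: to invoke outer regularity for $A_\alpha$ you implicitly need $A_\alpha$ to be measurable. This is true because $a\mapsto \mu(B(a,\varepsilon)\cap A)/\mu(B(a,\varepsilon))$ is continuous in both $a$ and $\varepsilon$, so the $\liminf$ may be computed along a countable set of radii; alternatively one can run the whole argument with outer measure. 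Either way the plan is sound, and the Vitali covering lemma is indeed the only substantive ingredient.
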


\begin{theorem}[Dominated Convergence Theorem]
Let $f_n : M \to \mathbb{R}$ be a sequence of measurable functions and g an integrable function such that $|f_n(x)| \leq |g(x)|$ for $\mu$-almost every $x$ in $M$. If for $\mu$-almost every $x \in M$ the sequence $f_n(x)$ converges to $f(x)$, then f is integrable:
\begin{center}
    $\displaystyle \lim_{n \to \infty} \displaystyle \int f_n\, d\mu = \int f\, d\mu$
\end{center}
\end{theorem}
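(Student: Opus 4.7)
The plan is to reduce the statement to showing $\int |f_n - f|\, d\mu \to 0$, from which the conclusion follows by linearity of the integral together with the inequality $\bigl|\int f_n\, d\mu - \int f\, d\mu\bigr| \leq \int |f_n - f|\, d\mu$. First, I would handle the easy measurability/integrability assertion for $f$: since $f_n \to f$ $\mu$-a.e.\ and each $f_n$ is measurable, $f$ is measurable as a pointwise almost everywhere limit, and passing to the limit in $|f_n(x)| \leq |g(x)|$ gives $|f(x)| \leq |g(x)|$ a.e., so $\int |f|\, d\mu \leq \int |g|\, d\mu < \infty$.

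The key tool I would introduce next is \emph{Fatou's Lemma}: for any sequence $h_n$ of non-negative measurable functions,
\begin{equation*}
\int \liminf_{n\to\infty} h_n\, d\mu \leq \liminf_{n\to\infty} \int h_n\, d\mu.
\end{equation*}
This is proved by setting $\varphi_n = \inf_{k \geq n} h_k$, which is non-negative, measurable, and monotonically increases to $\liminf h_n$, then invoking the \emph{Monotone Convergence Theorem} $\int \varphi_n\, d\mu \nearrow \int \liminf h_n\, d\mu$; comparing with $\varphi_n \leq h_n$ gives the stated inequality. The MCT itself is essentially forced by the construction of $\int f\, d\mu$ for non-negative $f$ as the limit along increasing sequences of simple functions, as in the definition given earlier in the text.

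With Fatou available, I would apply it to the auxiliary sequence $h_n := 2|g| - |f_n - f|$. The triangle inequality $|f_n - f| \leq |f_n| + |f| \leq 2|g|$ a.e.\ ensures $h_n \geq 0$ a.e., and $f_n \to f$ a.e.\ yields $h_n \to 2|g|$ a.e. Fatou's Lemma then gives
\begin{equation*}
\int 2|g|\, d\mu \leq \liminf_{n\to\infty} \int \bigl(2|g| - |f_n - f|\bigr)\, d\mu = \int 2|g|\, d\mu - \limsup_{n\to\infty} \int |f_n - f|\, d\mu.
\end{equation*}
Since $g$ is integrable, $\int 2|g|\, d\mu$ is finite and can be cancelled, forcing $\limsup_n \int |f_n - f|\, d\mu \leq 0$. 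Combined with non-negativity, this yields $\int |f_n - f|\, d\mu \to 0$ and hence the claim.

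The main obstacle is not the final algebraic maneuver but the fact that the MCT and Fatou's Lemma, which do the actual analytic work, have not been stated in the excerpt. A rigorous write-up therefore needs to establish the MCT carefully from the definition of the integral via increasing sequences of simple functions (checking independence from the approximating sequence and handling the $\infty$-valued case), after which Fatou and the argument above are routine. The use of ``$\mu$-almost every'' throughout requires only the standard observation that null sets do not affect integrals, so the $\mu$-a.e.\ hypotheses cause no additional difficulty.
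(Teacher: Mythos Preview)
Your argument is the standard and correct proof of the Dominated Convergence Theorem: establish integrability of $f$ from $|f|\le|g|$, then apply Fatou's Lemma to $2|g|-|f_n-f|$ to force $\int |f_n-f|\,d\mu\to 0$. The steps are sound, and you correctly flag that the Monotone Convergence Theorem and Fatou's Lemma need to be supplied since the surrounding text does not state them.

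However, there is nothing to compare against: the paper does not prove this theorem. It is listed in Section~1.3 among the ``Fundamental Theorems'' as a quoted result from the reference \cite{Krerley}, stated without proof and then immediately followed by the transition to Section~2 on Poincar\'e recurrence. So your proposal is not a different route from the paper's proof---it is simply a proof where the paper offers none.
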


\section{Poincaré Recurrence Theorem}

We now present an important theorem on ergotic theory, with a structure of presentation based on \cite{Krerley}, Chapter 2.

\subsection{First Ideas}

By \textit{dynamical systems} we understand the transformations $f : M \to M$ under a metric or topological space $M$. Since it is hard to analyze \textit{all} points under some transformation, the main objective of ergodic theory is to study the  behavior of non-empty measure sets under some invariant measure.\\

 In this sense, the Poincaré recurrence theorem states that, for every \textit{finite invariant} measure, almost every point $x \in M$ is recurrent, which means its trajectory under a transformation comes closer to $x$ when the time goes to infinity.\\

 We will present and prove the metric version of the theorem, which is about measurable sets. The topological version, about open sets, is quite similar.

\subsection{Metric version}

Let $\mu$ be a measure defined in a Borel $\sigma$-algebra of a space $M$.

 We say that a measure $\mu$ is invariant over a transformation $f$ if:

\begin{center}
    $\mu(E) = \mu(f^{-1}(E))$ for all measurable sets $E \subset M$
\end{center}

 for flows:

\begin{center}
    $\mu(E) = \mu(f^{-t}(E))$ for all measurable sets $E \subset M$ and all $t \in \mathbb{R}$
\end{center}

\begin{theorem}\label{thm:main}
Let $f : M \in M$ be a measurable transformation and $\mu$ a finite invariant measure. Let $E \subset M$ be any measurable set with $\mu(E) > 0$. So, $\mu$-almost every point $x \in E$ has some iterated $f^n(x)$, $n>1$ that is in $E$.
\end{theorem}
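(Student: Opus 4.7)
The plan is to isolate the set of ``never-returning'' points in $E$ and show it has measure zero. Concretely, I would define
\[
E_0 = \{x \in E : f^n(x) \notin E \text{ for all } n \geq 1\},
\]
which is measurable because it equals $E \cap \bigcap_{n \geq 1} f^{-n}(M \setminus E)$, a countable intersection of preimages of measurable sets under the measurable map $f$. Establishing $\mu(E_0)=0$ is equivalent to the conclusion of the theorem, since $E \setminus E_0$ is exactly the set of points in $E$ admitting some iterate back in $E$.

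The engine is the interplay between $\sigma$-additivity and the invariance $\mu(f^{-1}(A)) = \mu(A)$ applied to the sequence $E_0, f^{-1}(E_0), f^{-2}(E_0), \ldots$. The critical structural claim --- and the only genuinely delicate step --- is that these preimages are pairwise disjoint: if $x \in f^{-m}(E_0) \cap f^{-n}(E_0)$ with $m < n$, then $f^m(x) \in E_0 \subset E$, yet $f^{n-m}(f^m(x)) = f^n(x) \in E_0 \subset E$ with $n - m \geq 1$, contradicting the defining property of $E_0$.

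Granting disjointness, a straightforward induction from single-step invariance gives $\mu(f^{-n}(E_0)) = \mu(E_0)$ for every $n$, and then $\sigma$-additivity yields
\[
\mu(M) \;\geq\; \mu\!\left(\bigcup_{n=0}^{\infty} f^{-n}(E_0)\right) \;=\; \sum_{n=0}^{\infty} \mu(f^{-n}(E_0)) \;=\; \sum_{n=0}^{\infty} \mu(E_0).
\]
Since $\mu(M)$ is finite the series must converge, forcing $\mu(E_0) = 0$. The finiteness hypothesis on $\mu$ is essential here: without it the pigeonhole collapse in the last display fails, which is precisely why the classical statement is restricted to finite invariant measures.
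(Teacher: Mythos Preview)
Your proof is correct and follows essentially the same route as the paper: define the set $E_0$ of non-returning points, show the preimages $f^{-n}(E_0)$ are pairwise disjoint, and use invariance plus finiteness of $\mu$ to force $\mu(E_0)=0$. If anything, your write-up is slightly more careful, since you explicitly justify the measurability of $E_0$ and the bound $\mu(M)\geq \mu\bigl(\bigcup_n f^{-n}(E_0)\bigr)$.
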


 Before proving the theorem itself, we will prove a strong consequence of it:

\begin{corollary}
    Given Theorem 2.1, for $\mu$-almost every point $x \in E$ there exists an infinite number of values of $n\geq1$ such that $f^n(x)$ is in $E$.
\end{corollary}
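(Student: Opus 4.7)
The plan is to deduce the corollary from Theorem~\ref{thm:main} applied not to $E$ itself but to a measurable ``obstruction set'' of points that fail to return, and then to shift the resulting null set along the dynamics using the $f$-invariance of $\mu$.

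First I would single out the points of $E$ that never return:
\[
G \;=\; \bigl\{x \in E : f^n(x) \notin E \text{ for all } n \geq 1\bigr\} \;=\; E \setminus \bigcup_{n \geq 1} f^{-n}(E).
\]
This set is measurable, and Theorem~\ref{thm:main} is exactly the assertion that $\mu(G) = 0$, since that theorem guarantees that $\mu$-a.e.\ $x \in E$ has at least one positive iterate landing back in $E$.

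Next, let $F \subseteq E$ be the set of points whose return set $\{n \geq 1 : f^n(x) \in E\}$ is finite; the goal is to prove $\mu(F)=0$. I would partition $F$ according to the \emph{last} return time. For each $k \geq 0$ set
\[
F_k \;=\; \bigl\{x \in E : f^k(x) \in E \text{ and } f^n(x) \notin E \text{ for every } n > k\bigr\},
\]
so that $F = \bigcup_{k \geq 0} F_k$, where $F_0 = G$ collects the points that never return at all. The key observation is that if $x \in F_k$ and $y = f^k(x)$, then $y \in E$ while $f^m(y) = f^{k+m}(x) \notin E$ for every $m \geq 1$; thus $y \in G$, which gives $F_k \subseteq f^{-k}(G)$.

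Finally, the $f$-invariance of $\mu$ yields $\mu(f^{-k}(G)) = \mu(G) = 0$ for every $k \geq 0$, and countable subadditivity closes the argument:
\[
\mu(F) \;\leq\; \sum_{k \geq 0} \mu\bigl(f^{-k}(G)\bigr) \;=\; 0.
\]
Hence $\mu$-a.e.\ $x \in E$ has $f^n(x) \in E$ for infinitely many $n \geq 1$, which is the statement of the corollary. The only step that requires any cleverness is spotting the partition of $F$ by last return time together with the observation that iterating each $F_k$ forward by exactly $k$ steps lands inside $G$; once that is identified, Theorem~\ref{thm:main} and the invariance of $\mu$ do all of the remaining work.
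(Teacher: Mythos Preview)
Your argument is correct. The paper's proof proceeds differently: it partitions the ``bad'' set according to the \emph{exact number} of returns, setting $E_k = \{x \in E : x \text{ returns to } E \text{ exactly } k \text{ times}\}$, and then applies Theorem~\ref{thm:main} to each $E_k$ separately to derive a contradiction (a point of $E_k$ that returns to $E_k$ would have at least $k+1$ returns to $E$). Your proof instead partitions by the \emph{last} return time, applies Theorem~\ref{thm:main} only once to $E$ to obtain the single null set $G$, and then uses the $f$-invariance of $\mu$ explicitly to propagate that null set via $F_k \subseteq f^{-k}(G)$. The paper's route has the mild advantage of invoking the recurrence theorem as a black box without reopening the invariance hypothesis, while yours is arguably more transparent because it isolates exactly one null set and avoids the contradiction format; the two arguments are close cousins but not the same decomposition.
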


\begin{proof} Let, for each $k\geq1$, $E_k$ be the set of points $x \in E$ that return exactly $k$ times to $E$. This means that, for a $x \in E_k$, there exists exactly $k$ values of iterates $n\geq1$ such that $f^n(x) \in E$.

 The set of points that return to $E$ a finite number of times is:

\begin{center}
    $\displaystyle \bigcup_{k=1}^\infty E_k$
\end{center}

 Thus, it is enough to prove that $\mu(E_k) = 0$ for all $k\geq 1$. We will prove it by contradiction.

 Suppose that $\mu(E_k) > 0$ for some $k\geq1$. The, by the previous theorem, we got that almost every point $x \in E_k$ has an iterate $f^n(x)$ in $E_k$. Let's fixate $x$ and let $y = f^n(x)$. Since $y \in E_k$, $y$ has exactly $k$ future iterates in $E$. But, since $y$ is an iterate of $x$, $x$ has $k+1$ future iterates in $E$. That contradicts the fact that $x \in E_k$. Thus, $\mu(E_k) = 0$ for all $k\geq1.$ 
\end{proof}

 Now, to the main proof:

\begin{proof}[Proof of Theorem \ref{thm:main}]
Let $E^0$ be the set of points $x \in E$ that never return to $E$. The objective is to prove that $\mu(E^0) = 0$. First, let state that the pre-images $f^{-n}(E^0)$ are two by two disjoints. Suppose by contradiction that there exist $m>n\geq1$ such that $f^{-m}(E_0)$ intersects $f^{-n}(E_0)$. Let $x \in f^{-m}(E_0) \cap f^{-n}(E_0)$ and $y = f^n(x)$. So, $y$ is in $E^0$ and $f^m(x) = f^{m-n}(f^n(x)) = f^{m-n}(y)$. Since $f^m(x) \in E^0$ and $E^0 \subset E$, $y$ returns to $E$ at least once ($m-n>0$). That contradicts the definition of $E^0$. So the pre-images are two by two disjoints, in fact.

 That implies:

\begin{center}
    $\mu(\displaystyle \bigcup_{n=0}^\infty f^{-n}(E_0)) = \displaystyle \sum_{n=0}^\infty \mu(f^{-n}(E^0)) = \displaystyle \sum_{n=0}^\infty \mu(E^0)$
\end{center}

 In the last equality we used the hypothesis that $\mu$ is invariant, so $\mu(f^{-n}(E^0)) = \mu(E^0)$ for all $n\geq1$. Since the measure is finite, the left side is finite. But the right side is an infinite sum of equal terms. The only way it can be finite is by $\mu(E^0) = 0$.
\end{proof}

\section{Conservative Systems}

We now present and characterize a \textit{conservative} system in the sense that was proposed as an example on \cite{Krerley}, Section 3.2.

\subsection{Invariant Measures}

We now present a proposition characterizing an invariant measure over a transformation or a flow:

\begin{proposition}
    Let $f : M \to M$ be a transformation and $\mu$ a measure. Then, $f$ preserves $\mu$, if and only if, for every integrable function $\varphi : M \to \mathbb{R}$ it is valid: 

\begin{center}
    $\displaystyle \int \varphi\ d\mu = \displaystyle \int \varphi \circ f\ d\mu$
\end{center}
\end{proposition}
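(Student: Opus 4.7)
The plan is to prove the two implications separately, with the forward direction following the classical \emph{standard machine} of measure theory: verify the identity for characteristic functions, extend by linearity to simple functions, then by monotone approximation to non-negative measurable functions, and finally to general integrable functions.

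The reverse direction is essentially immediate: assuming the integral identity holds for every integrable $\varphi$, apply it to $\varphi = \chi_E$ for a measurable set $E$ of finite measure. Since $\chi_E \circ f = \chi_{f^{-1}(E)}$, one obtains $\mu(E) = \int \chi_E\, d\mu = \int \chi_E \circ f\, d\mu = \mu(f^{-1}(E))$, which is exactly the definition of invariance. (If $E$ has infinite measure one reduces to this case by exhausting $E$ with finite-measure subsets, which is available since we are working in the finite-measure setting of the previous section.)

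For the forward direction, I would carry out the approximation argument in four stages. First, for $\varphi = \chi_E$, the identity $\int \chi_E\, d\mu = \int \chi_E \circ f\, d\mu$ reduces to $\mu(E) = \mu(f^{-1}(E))$, which is the hypothesis. Second, by linearity of the integral together with the identity $(\sum_j \alpha_j \chi_{A_j}) \circ f = \sum_j \alpha_j \chi_{f^{-1}(A_j)}$, the equality extends to every simple function. Third, for a non-negative measurable $\varphi$, invoke Theorem 1.3 to produce an increasing sequence of simple functions $s_n$ with $s_n \to \varphi$ pointwise; then $s_n \circ f$ is an increasing sequence of simple functions converging pointwise to $\varphi \circ f$, and the definition of the integral as a limit of integrals of simple functions yields $\int \varphi\, d\mu = \lim_n \int s_n\, d\mu = \lim_n \int s_n \circ f\, d\mu = \int \varphi \circ f\, d\mu$. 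Finally, for a general integrable $\varphi$, decompose $\varphi = \varphi^+ - \varphi^-$ and apply the previous step to each non-negative piece.

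The main subtlety, rather than a genuine obstacle, lies in the third stage: one must check that $s_n \circ f$ remains an increasing sequence of simple functions and forms a valid approximating sequence for $\varphi \circ f$, so that the definition of the integral for non-negative functions can legitimately be applied on both sides. Monotonicity is preserved pointwise because composition with $f$ respects inequalities, and measurability of $s_n \circ f$ follows from the measurability of $f$ and the fact that each $s_n$ is simple. Once this is secured, the remaining steps are linearity bookkeeping and the standard positive/negative decomposition.
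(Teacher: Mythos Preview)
Your proposal is correct and follows essentially the same route as the paper: the reverse direction is obtained by specializing to $\varphi = \chi_E$, and the forward direction runs the standard machine (characteristic $\to$ simple $\to$ non-negative via monotone approximation $\to$ general via $\varphi = \varphi^{+} - \varphi^{-}$). The paper's argument is slightly terser---it passes directly from simple functions to arbitrary integrable ones using the integral definition without explicitly splitting into positive and negative parts---but the substance is identical, and your extra care about why $s_n \circ f$ remains a valid approximating sequence is a welcome clarification.
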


\begin{proof}[Proof. \( (\Rightarrow) \)]
Assume that $f$ preserves $\mu$. If $\mu$ is the characteristic function of a set $A$, then:

\begin{align}
\chi_{f^{-1}(A)} &= \chi_A \circ f \notag \\
\chi_{f^{-1}(A)} &= \varphi \circ f
\end{align}

 Since:
\begin{center}
    $\mu(f^{-1}(A)) = \displaystyle \int \chi_{f^{-1}(A)}\ d\mu$
\end{center}

 By $(1)$:
\begin{center}
    $\mu(f^{-1}(A)) = \displaystyle \int \varphi \circ f\ d\mu$
\end{center}

 Using the fact that $\mu(f^{-1}(A)) = \mu(A)$:

\begin{align*}
    \mu(A) &= \displaystyle \int \varphi \circ f\ d\mu\\
    \displaystyle \int \chi_A\ d\mu &= \displaystyle \int \varphi \circ f\ d\mu\\
    \displaystyle \int \varphi\ d\mu &= \displaystyle \int \varphi \circ f\ d\mu
\end{align*}

 Then, it is proved when $\varphi$ is a characteristic function. By the linearity of the integral, if $\varphi$ is a simple function (which means, a linear combination of characteristic functions $\chi_{A_k}$ of disjoint sets $A_1,\dots,A_k$), then the equality holds. Finally, if $\varphi$ is an arbitrary function, by the integral definition:

\begin{center}
    $\displaystyle \int \varphi\ d\mu = \lim_{n \to \infty} \displaystyle \int \varphi_n\ d\mu$
\end{center}

\noindent where $\varphi_n$ are simple crescent functions and $\varphi_n \to \varphi$. Still, $\varphi_n \circ f$ are simple crescent functions and $\varphi_n \circ f \to \varphi \circ f$. Then:

\begin{center}
    $\displaystyle \int \varphi \circ f\ d\mu = \lim_{n \to \infty} \displaystyle \int \varphi_n \circ f\ d\mu$
\end{center}

 Since $\int \varphi_n\ d\mu = \int \varphi_n \circ f\ d\mu$, taking the limit on both sides:

\[
\displaystyle \int \varphi\ d\mu = \displaystyle \int \varphi \circ f\ d\mu 
\]
\end{proof}

\begin{proof}[Proof. \( (\Leftarrow) \)]Given a set $A$ in a Borel $\sigma$-algebra of $M$, let $\varphi = \chi_A$:

\begin{align*}
    \displaystyle \int \varphi\ d\mu &= \displaystyle \int \varphi \circ f\ d\mu\\[2mm]
    \mu(A) &= \mu(f^{-1}(A))
\end{align*}
\end{proof}

\begin{figure}[h]
\nopagebreak
\subsection{Characterization of Conservative Systems}

Let $U$ be an open set on $\mathbb{R}^d$, $d \geq 1$ and let $f : U \to U$ a $C^1$ diffeomorphism. We represent by $m$ the Lebesgue measure volume in $\mathbb{R}^k$:
\end{figure}

\begin{center}
    $m(B) = \displaystyle \int_B dx_1\dots dx_d$ and $\displaystyle \int_B \phi\ dvol = \displaystyle \int_B \phi(x_1,\dots x_d)\ dx_1\dots dx_d$
\end{center}

\noindent for any measurable set $B$ and any integrable function $\varphi$.\\

 The change of variables formula stats that, for any measurable set $B \subset U$:

\begin{center}
    $m(f(B)) = \displaystyle \int_B |\det Df|\ dm$
\end{center}

 Then we can deduce the following:\\

\begin{lemma}
    A $C^1$ diffeomorphism $f : M \to M$ preserves the volume if and only if $|\det Df| = 1$.
\end{lemma}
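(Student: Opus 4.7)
The plan is to prove both implications by reducing the statement to the change of variables formula stated just above the lemma, together with the fact that $f$ is a diffeomorphism (so $f^{-1}$ exists and every measurable $E \subset U$ can be written as $f(B)$ with $B = f^{-1}(E)$ measurable).

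For the reverse direction, I would assume $|\det Df| \equiv 1$ and, given a measurable set $E \subset U$, apply the change of variables formula to $B = f^{-1}(E)$. This gives
\[
m(E) = m(f(f^{-1}(E))) = \int_{f^{-1}(E)} |\det Df|\, dm = \int_{f^{-1}(E)} 1\, dm = m(f^{-1}(E)),
\]
which is exactly the invariance condition $\mu(E)=\mu(f^{-1}(E))$ required by the definition of a measure-preserving map. This direction is essentially a direct substitution and should not cause difficulty.

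For the forward direction, I would assume $f$ preserves volume and show $|\det Df|=1$ pointwise. Applying the preservation condition to $E=f(B)$ yields $m(B) = m(f(B))$ for every measurable $B \subset U$, and then the change of variables formula gives
\[
\int_B |\det Df|\, dm \;=\; m(f(B)) \;=\; m(B) \;=\; \int_B 1\, dm
\]
for every measurable $B$. The standard measure-theoretic conclusion is that two integrable functions whose integrals agree on every measurable set must coincide $m$-almost everywhere, so $|\det Df|=1$ $m$-a.e.

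The main (minor) obstacle is upgrading the almost-everywhere equality to an everywhere equality. Here I would invoke the hypothesis that $f$ is $C^1$: the map $x \mapsto |\det Df(x)|$ is then continuous on the open set $U$, and a continuous function on an open subset of $\mathbb{R}^d$ that equals $1$ on a set of full Lebesgue measure must equal $1$ at every point (otherwise the open set $\{|\det Df|\neq 1\}$ would have positive Lebesgue measure). This completes the characterization.
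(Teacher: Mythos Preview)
Your proof is correct. The ``if'' direction (assuming $|\det Df|\equiv 1$ and deducing invariance via change of variables on $B=f^{-1}(E)$) is exactly what the paper does.

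For the ``only if'' direction you take a somewhat different route from the paper. The paper argues by direct contradiction at a point: if $|\det Df(x)|>1$ for some $x$, continuity gives a neighborhood $U$ of $x$ and $\sigma>1$ with $|\det Df|\ge\sigma$ on $U$, and then change of variables yields $m(f(U))\ge\sigma\,m(U)>m(U)$, so $E=f(U)$ witnesses failure of invariance (and symmetrically if $|\det Df(x)|<1$). You instead deduce $\int_B|\det Df|\,dm=\int_B 1\,dm$ for all measurable $B$, invoke the standard fact that this forces $|\det Df|=1$ $m$-a.e., and then use continuity to upgrade to an everywhere equality. Both arguments are valid and both ultimately hinge on the continuity of $|\det Df|$; the paper's version is slightly more concrete and self-contained (it never needs the ``equal integrals on all sets $\Rightarrow$ equal a.e.'' lemma), while yours is a clean measure-theoretic reduction that would generalize more readily.
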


\begin{proof}[Proof. \( (\Rightarrow) \)] 
Let $E$ be a measurable set and $B = f^{-1}(E)$. If $|\det Df| = 1$, then:

\[
m(E) = \displaystyle \int_B 1\ dm = m(B) = m(f^{-1}(E))
\]
\end{proof}

\begin{proof}[Proof. \( (\Leftarrow) \)]Suppose $|\det Df| > 1$ at some point $x$. Since the Jacobian is continuous, there is a neighborhood $U$ of $x$ and some $\sigma > 1$ such that:

\begin{center}
    $|\det Df(y)| \geq \sigma$ for all $y \in U$ 
\end{center}

 The, by making $B = U$:

\begin{center}
    $m(f(U)) \geq \displaystyle \int_B \sigma\ dm \geq \sigma m(U) > m(U)$
\end{center}

 Denoting $E = f(U)$, then $m(E) > m(f^{-1}(E)$ and the volume is not invariant by f. It is similar for $|\det Df| < 1$ at some point.
\end{proof}

 We now analyze the case of flows $\phi^t : U \to U$, $t \in \mathbb{R}$. Suppose a $C^1$ flow. The Lemma 3.1 is applicable: the flow preserves the volume if and only if:

\begin{center}
    $\det D\phi^t(x) = 1$ for all $x \in U$ and for all $t \in \mathbb{R}$
\end{center}

 Suppose the flow $\phi^t$ as the trajectories of a $C^1$ vector field $F : U \to U$, which means, $\phi^t(x)$ is the solution of the differential equation:

\begin{center}
    $\dfrac{dx}{dt} = F(x)$
\end{center}

\noindent at time $t$. The Liuoville formula gives the Jacobian of $\phi^t$ in terms of $\operatorname{div} \textbf{F}$:

\begin{center}
        $\det Df^t(x) = \exp{\left(\displaystyle \int_0^t \operatorname{div} \textbf{F}(\phi^s(x))\ ds\right)}$
\end{center}

 For volume invariance, $\det Df^t(x) = 1$, then $\operatorname{div} \textbf{F} = 0$:\\

\begin{lemma}
    A flow $\phi^t$ associated with a $C^1$ vector field $F$ preserves the volume if and only if $\operatorname{div}\textbf{F} = 0$.
\end{lemma}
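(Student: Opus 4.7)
The plan is to chain together the two ingredients already on the table: the criterion from Lemma 3.1, which translates volume preservation by a diffeomorphism into the condition $|\det Df| = 1$, and Liouville's formula, which expresses $\det D\phi^t$ as the exponential of a time-integral of $\operatorname{div}\mathbf{F}$ along the trajectory. Together they reduce the lemma to a purely analytic equivalence between a vanishing integral (for all initial points and all times) and a pointwise vanishing of the integrand.

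First I would argue that volume preservation by the flow is equivalent to $\det D\phi^t(x) = 1$ for every $x \in U$ and every $t \in \mathbb{R}$. Applying Lemma 3.1 to the diffeomorphism $\phi^t$ (for each fixed $t$) gives $|\det D\phi^t(x)| = 1$. The absolute value can be removed because $\phi^0 = \mathrm{id}$ gives $\det D\phi^0 \equiv 1 > 0$, the map $t \mapsto \det D\phi^t(x)$ is continuous, and it never vanishes (since $\phi^t$ is a diffeomorphism), so by the intermediate value theorem it stays strictly positive.

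Next, I would invoke Liouville's formula to rewrite this as
\begin{equation*}
\exp\!\left(\int_0^t \operatorname{div}\mathbf{F}(\phi^s(x))\,ds\right) = 1 \quad \text{for all } x \in U,\ t \in \mathbb{R},
\end{equation*}
which is in turn equivalent to $\int_0^t \operatorname{div}\mathbf{F}(\phi^s(x))\,ds = 0$ for all such $x$ and $t$. For the $(\Leftarrow)$ direction, if $\operatorname{div}\mathbf{F} \equiv 0$ on $U$ then the integrand is identically zero and the identity holds trivially, so the flow preserves volume. For the $(\Rightarrow)$ direction, fix any $x \in U$ and differentiate the relation $\int_0^t \operatorname{div}\mathbf{F}(\phi^s(x))\,ds = 0$ with respect to $t$ at $t=0$; since $\operatorname{div}\mathbf{F}$ is continuous and $\phi^0(x) = x$, the fundamental theorem of calculus yields $\operatorname{div}\mathbf{F}(x) = 0$, and $x$ was arbitrary.

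The only delicate point I foresee is the sign discussion in the first step: without checking that $\det D\phi^t$ stays positive, one only obtains $|\det D\phi^t|=1$, which together with Liouville's formula would force $\int_0^t \operatorname{div}\mathbf{F}(\phi^s(x))\,ds \in \{0\}$ (the exponential is always positive), so this is in fact automatic; nevertheless it is worth stating explicitly to justify passing from $|\det D\phi^t|=1$ to the Liouville identity without ambiguity. Once that is in place, the rest is just differentiating an integral with continuous integrand at $t=0$, which is routine.
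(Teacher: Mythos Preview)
Your proposal is correct and follows essentially the same approach as the paper: apply Lemma~3.1 to each time-$t$ diffeomorphism to reduce volume preservation to $\det D\phi^t \equiv 1$, then invoke Liouville's formula to link this to $\operatorname{div}\mathbf{F}=0$. You are in fact more careful than the paper, which states the conclusion directly from these two ingredients without spelling out the sign discussion or the differentiation at $t=0$ for the forward implication.
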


 By the Poincaré recurrence theorem for flows, if $U$ has finite measure and $\operatorname{div} \textbf{F} = 0$, then $\mu$-almost every point $x$ is invariant by the flow $\phi^t$ of $F$.

\section{Hamiltonian Systems}

In this last chapter, we discuss the Hamiltonian systems, their differential equations, vector fields and flows, and prove their volume invariance on phase space. The Section 4.1 was based  on \cite{Robinson}, Section 6.1, and the second part is a deepening discussion of two examples briefly presented on \cite{Robinson}, also on Section 6.1.

\subsection{Hamiltonian Differential Equations}

If we have a particle moving through a configuration space $S = \{\mathbf{q} \in \mathbb{R}^n\}$, we can describe the system by determining the position $\mathbf{q}$ and the momentum $\mathbf{p}$ of the particle over time $t \in \mathbb{R}$. In this sense, there exists the potential energy $V : S \to \mathbb{R}$, that depends on $\mathbf{q}$, the kinetic energy $T : S \to \mathbb{R}$, that depends on $\mathbf{p}$, and the Hamiltonian:

\begin{center}
    $H(\mathbf{q},\mathbf{p}) = V(\mathbf{q}) + T(\mathbf{p})$
\end{center}

 That depends on both $\mathbf{q}$ and $\mathbf{p}$, and represents the total energy of the phase space (the space of all possible combinations of $\mathbf{q}$ and $\mathbf{p}$) $S \times \mathbb{R}$. It is natural, then, to imagine that some of this measures would help to study the behavior of $\mathbf{q}$ and $\mathbf{p}$ over time, namely, the Hamiltonian $H$.\\

 First, by the equation of motion:

\begin{equation}
    m\ddot{q} = -\nabla V_q
\end{equation}

\noindent and by the definition of momentum $\mathbf{p} = m\mathbf{v}$, we have:

\begin{align}
    \dot{q_j} &= \frac{p_j}{m} \notag \\
    \dot{p_j} &= m\ddot{q}_j
\end{align}

 By $(2)$:

\begin{equation}
    \dot{p_j} = -\dfrac{\partial V}{\partial q_j}
\end{equation}

\noindent for $1 \leq j \leq n$

 Equations $(2)$ and $(3)$ can be written in the Hamiltonian form. First, deriving $H$ in terms of $q_j$:

\begin{center}
    $\dfrac{\partial H}{\partial q_j} = \dfrac{\partial V}{\partial q_j} + \dfrac{\partial T}{\partial q_j}$
\end{center}

 Since $\partial T/\partial q_j = 0$, by $(3)$:

\begin{center}
    $\dot{p_j} = -\dfrac{\partial H}{\partial q_j}$
\end{center}

 Deriving in terms of $p_j$, remembering that $T(\mathbf{p}) = \mathbf{p} \cdot \mathbf{p}/2m$:

\begin{center}
    $\dfrac{\partial H}{\partial p_j} = \dfrac{\partial V}{\partial p_j} + \dfrac{\partial}{\partial p_j} \left(\dfrac{1}{2m}\displaystyle \sum_{i = 1}^n p_i^2\right)$
\end{center}

 Since $\partial V/\partial p_j = 0$:

\begin{center}
    $\dfrac{\partial H}{\partial p_j} = \dfrac{p_j}{m} \Rightarrow \dot{q}_j = \dfrac{\partial H}{\partial p_j}$
\end{center}

 Then, we have the system:

\[
\left\{
\begin{aligned}
\dot{q}_j &= \frac{\partial H}{\partial p_j}\\[2mm]
\dot{p}_j &= -\frac{\partial H}{\partial q_j}
\end{aligned}
\right., \; 1 \le j \le n
\]

 If we consider the phase space as $\mathbb{R}^{2n}$, the time-dependent Hamiltonian is $H : \mathbb{R}^{2n} \times \mathbb{R} \to \mathbb{R}$, $H(q_1,\dots,q_n,p_1,\dots p_n,t)$, with $t \in \mathbb{R}$ being the time. Then, we have the system of differential equations:

\[
\left\{
\begin{aligned}
\dot{q}_j &= \frac{\partial H(\mathbf{q},\mathbf{p},t)}{\partial p_j}\\[2mm]
\dot{p}_j &= -\frac{\partial H(\mathbf{q},\mathbf{p},t)}{\partial q_j}
\end{aligned}
\right., \; 1 \le j \le n
\]

 The Hamiltonian vector field $X_H$ for this system is:

\begin{center}
    $X_H(\mathbf{q},\mathbf{p}) = \left(\dfrac{\partial H}{\partial q_1},\dots,\dfrac{\partial H}{\partial q_n},-\dfrac{\partial H}{\partial p_1},\dots,-\dfrac{\partial H}{\partial p_n}\right)$
\end{center}

 If $\mathbf{z} = (\mathbf{q},\mathbf{p}) = (z_1,\dots,z_{2n})$, then:

\begin{center}
    $X_H(\mathbf{z}) = (\dot{z_1},\dots,\dot{z_{2n}})$
\end{center}

 In time-dependent case, the flow of the Hamiltonian vector field for a point $\mathbf{z} = (q_1,\dots,q_n,p_1,\dots,p_n)$ in the phase space $\mathbb{R}^{2n}$ at the time $t$ is:

\begin{center}
    $\phi^t(\mathbf{z}) = (q_1(t),\dots,q_n(t),p_1(t),\dots,p_n(t))$
\end{center}

\noindent where $(\mathbf{q}(t),\mathbf{p}(t))$ satisfies the Hamiltonian systems of equations at time $t$, which means, the flow takes the initial point $\phi^0(\mathbf{q},\mathbf{p})$ to $\mathbf{z}(t)$ following the vector field  $X_H$.

\begin{proposition}
    Let $H$ be a time-independent Hamiltonian for a Hamiltonian vector field $X_H$. Then, $H$ is constant along solutions of $X_H$.
\end{proposition}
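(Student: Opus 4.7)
The plan is to differentiate $H$ along an arbitrary solution curve of $X_H$ and show that the derivative vanishes identically. Let $\gamma(t) = (\mathbf{q}(t), \mathbf{p}(t))$ be any integral curve of $X_H$, so that $\gamma$ satisfies the Hamiltonian system derived earlier in this section. Because $H$ is time-independent, the map $t \mapsto H(\gamma(t))$ depends on $t$ only through the coordinates $q_j(t)$ and $p_j(t)$, with no explicit $\partial H/\partial t$ term contributing.

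First I would apply the chain rule to obtain $\frac{d}{dt} H(\mathbf{q}(t), \mathbf{p}(t)) = \sum_{j=1}^n \left( \frac{\partial H}{\partial q_j} \dot{q}_j + \frac{\partial H}{\partial p_j} \dot{p}_j \right)$. Next I would substitute Hamilton's equations $\dot{q}_j = \partial H/\partial p_j$ and $\dot{p}_j = -\partial H/\partial q_j$ into this expression, yielding $\sum_{j=1}^n \left( \frac{\partial H}{\partial q_j} \frac{\partial H}{\partial p_j} - \frac{\partial H}{\partial p_j} \frac{\partial H}{\partial q_j} \right) = 0$. Hence $H \circ \gamma$ has vanishing derivative on its interval of definition, and is therefore constant along each orbit, which is precisely the claim.

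There is no genuine obstacle here; the only delicate point is to make the role of the time-independence hypothesis explicit, since without it the chain rule would produce an extra $\partial H/\partial t$ term that would not generally cancel. In essence, the proof is a single application of the chain rule, with the antisymmetric pairing of Hamilton's equations forcing the two mixed products of first partials to cancel term by term.
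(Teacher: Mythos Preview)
Your proposal is correct and follows exactly the same route as the paper: apply the chain rule to $H(\mathbf{q}(t),\mathbf{p}(t))$, substitute Hamilton's equations, and observe the termwise cancellation. If anything, your write-up is slightly more careful in making the role of time-independence explicit.
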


\begin{proof}
\[
\dot{H}(\mathbf{q}(t),\mathbf{p}(t)) = \displaystyle \sum_{j=1}^n \left(\dfrac{\partial H}{\partial q_j}q_j + \dfrac{\partial H}{\partial p_j}p_j\right) = \displaystyle \sum_{j=1}^n \dfrac{\partial H}{\partial q_j}\dfrac{\partial H}{\partial p_j} + \dfrac{\partial H}{\partial p_j}\left(-\dfrac{\partial H}{\partial q_j}\right) = 0
\]
\end{proof}

\begin{proposition}
    A Hamiltonian vector field has zero divergence, so its flow preserves the volume.
\end{proposition}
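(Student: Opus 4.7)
The plan is to compute the divergence of $X_H$ directly from its coordinate expression, observe that it collapses to a sum of differences of mixed second partials of $H$, and then invoke Clairaut/Schwarz's theorem together with Lemma 3.2 to obtain volume invariance.

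First I would fix coordinates $\mathbf{z}=(q_1,\dots,q_n,p_1,\dots,p_n)$ on the phase space $\mathbb{R}^{2n}$ and write the components of $X_H$ supplied by Hamilton's equations, namely $X_H^{(j)}=\partial H/\partial p_j$ for $1\le j\le n$ and $X_H^{(n+j)}=-\partial H/\partial q_j$ for $1\le j\le n$. Then the divergence on $\mathbb{R}^{2n}$ is
$$\operatorname{div} X_H \;=\; \sum_{j=1}^n \frac{\partial}{\partial q_j}\!\left(\frac{\partial H}{\partial p_j}\right) \;+\; \sum_{j=1}^n \frac{\partial}{\partial p_j}\!\left(-\frac{\partial H}{\partial q_j}\right) \;=\; \sum_{j=1}^n\left(\frac{\partial^2 H}{\partial q_j\,\partial p_j}-\frac{\partial^2 H}{\partial p_j\,\partial q_j}\right).$$

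Next I would invoke the regularity hypothesis $H\in C^2$ (which is already implicit in the assumption that the vector field $X_H$ is $C^1$, since its components are first partials of $H$), so that Clairaut/Schwarz's theorem applies and each summand above vanishes identically. This yields $\operatorname{div} X_H \equiv 0$ on all of phase space. Finally I would cite Lemma 3.2, which states precisely that a $C^1$ vector field with vanishing divergence has a volume-preserving flow, to conclude the second half of the proposition.

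No genuine obstacle arises: the computation is a one-line consequence of the antisymmetric pairing of $q_j$ and $p_j$ built into the Hamiltonian form. The only point warranting a brief remark is the $C^2$ regularity of $H$, needed for the equality of mixed partials; this should be stated at the outset or read into the $C^1$ hypothesis on $X_H$.
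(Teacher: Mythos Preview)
Your proposal is correct and follows essentially the same route as the paper: a direct computation of $\operatorname{div} X_H$ in the $(q,p)$ coordinates, which vanishes by equality of mixed partials, followed by the appeal to Lemma~3.2 for volume preservation. The paper's own proof is in fact the one-line version of your argument, omitting the explicit mention of Clairaut/Schwarz and the $C^2$ regularity remark that you (rightly) spell out.
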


\begin{proof}
\[
\operatorname{div}(X_H)(\mathbf{z}) = \displaystyle \sum_{j=1}^n \dfrac{\partial}{\partial q_j}\left(\dfrac{\partial H}{\partial p_j}\right)(\mathbf{z}) + \dfrac{\partial}{\partial p_j}\left(-\dfrac{\partial H}{\partial q_j}\right)(\mathbf{z}) = 0 
\]
\end{proof}

\subsection{Examples and conclusion}

In this section we will analyze two important examples of Hamiltonian systems, depicting their Hamiltonian function, the solutions $(p,q)$ and $(\theta,p)$ of the equations and the phase portraits (the geometric representation of the solutions). Finally, we present our conclusions about the topics discussed.\\

\begin{example} 
The equation of the linear harmonic oscillator is:

\begin{center}
    $m\ddot{q} +kq = 0$
\end{center}

\noindent with $k = m\omega^2$, we have:

\begin{equation}
\ddot{q} + \omega^2q = 0
\end{equation}

 The kinetic energy is:

\begin{center}
    $T(p) = \dfrac{p^2}{2m}$
\end{center}

\noindent and the potential energy is:

\begin{align*}
V(q) &= - \int m \ddot{q}\, dq \\
V(q) &= - \int -\omega^2 q\, dq
\end{align*}

 Making the constant $C = 0$, such that $V(0) = 0$, it comes:

\begin{center}
    $V(q) = \dfrac{m\omega^2q^2}{2}$
\end{center}

 Thus, the Hamiltonian function is:

\begin{center}
    $H(q,p) = \dfrac{p^2}{2m} + \dfrac{m\omega^2q^2}{2}$
\end{center}

 Choosing $m=1$ for simplicity:

\begin{center}
    $H(q,p) = \dfrac{p^2 + \omega^2p^2}{2}$
\end{center}

 The solutions of $(5)$ can be found by the characteristic equations and are the following:
\begin{align*}
q(t) &= A\cos(\omega(t - \delta)) \\[2mm]
p(t) &= -A\omega\sin(\omega(t - \delta))
\end{align*}

 Since $H$ is time independent, we can fixate $H = E$:
\begin{align*}
\dfrac{p^2}{2m} + \dfrac{m\omega^2q^2}{2} = E\\[2mm]
\dfrac{p^2}{2mE} + \dfrac{q^2}{\left(\dfrac{2E}{m\omega^2}\right)} = 1
\end{align*}

 That means that the phase portrait of $(q,p)$ represents concentric ellipses depending on $E$. Normalizing ($m = 1$, $\omega = 1$), we have concentric circles depending on $E$:

\begin{figure}[H]
        \centering
        \includegraphics[width=0.5\linewidth]{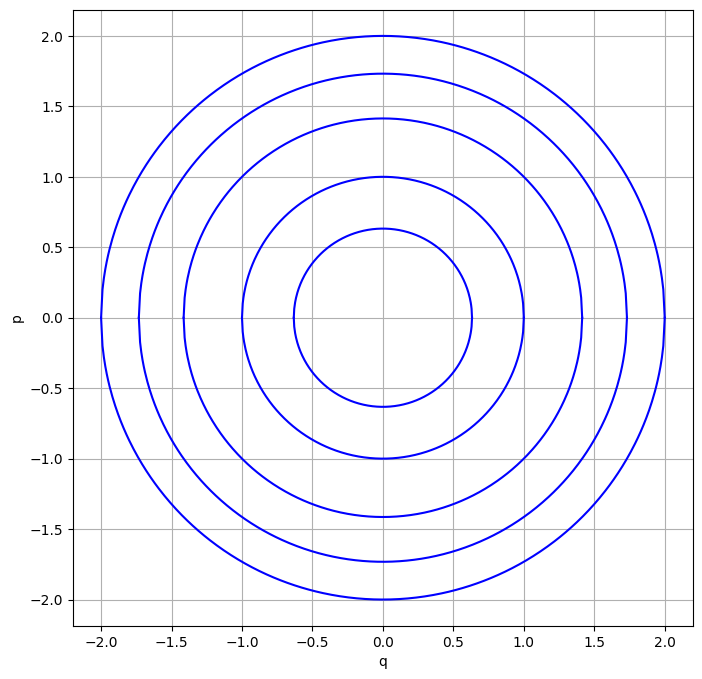}
        \caption{Linear harmonic oscillator phase portrait}
        \label{fig:placeholder}
\end{figure}
\end{example}

\begin{example}
The equation of a pendulum of mass m and length L is:

\begin{equation}
mL\ddot{\theta} = -mg\sin \theta
\end{equation}

 The potential energy is:

\begin{align*}
    V(\theta) =& - \displaystyle \int mL^2\ddot\theta\ d\theta\\
    V(\theta) =& - \displaystyle \int - mgL\sin\theta\ d\theta\\
    V(\theta) =& -mgL\cos\theta + C
\end{align*}

 Choosing $C = mgL$, such that $V(0) = 0$:

\begin{center}
    $V(\theta) = mgL(1 - cos\theta)$
\end{center}

 Dividing by $mL^2$ for simplification:

\begin{center}
    $V(\theta) = \dfrac{g}{L}(1-\cos\theta)$
\end{center}

 So, we got the normalized Hamiltonian:

\begin{center}
    $H(\theta,p) = \dfrac{p^2}{2} + \dfrac{g}{L}(1-\cos\theta)$
\end{center}

 In the case of small oscillations ($\sin\theta \approx \theta$), the pendulum behaves like a linear oscillator and its phase portrait is the same (concentric ellipses depending on $E$). The solutions are also similar:
\begin{align*}
    \theta(t) &= \theta_{max}\cos(\omega t - \phi)\\
    p(t) &= -\theta_{max}\omega\sin(\omega t - \phi)
\end{align*}

 For greater amplitudes, we have:

\begin{center}
    $p(t) = \pm \sqrt{2(E - (1 - \cos\theta(t)))}$
\end{center}

 and a different phase portrait:

\begin{figure}[H]
    \centering
    \includegraphics[width=0.8\linewidth]{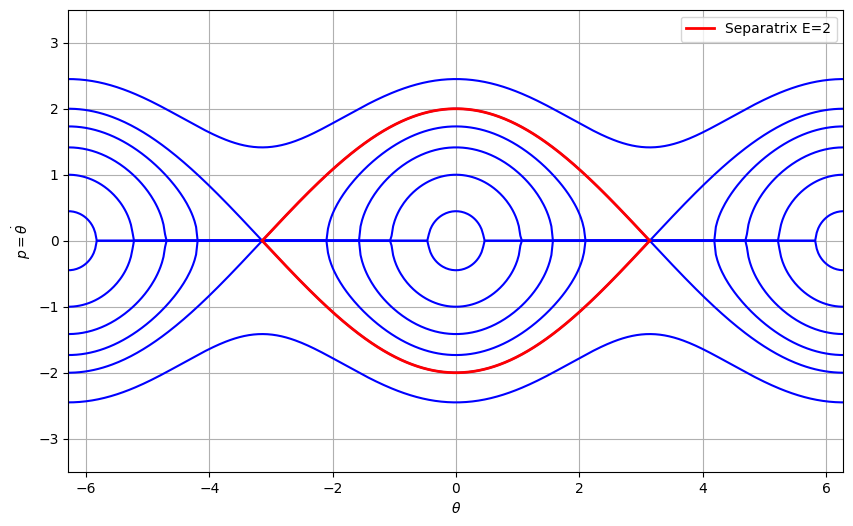}
    \caption{Non-linear pendulum phase portrait}
    \label{fig:placeholder}
\end{figure}

\begin{itemize}
    \item If $E < 2$, we have closed trajectories (represents the oscillations)
    \item If $E > 2$, we have open curves (represents complete rotations)
\end{itemize}
\end{example}

 In summary, Hamiltonian systems are an example of conservative systems, since their flow $\phi^t$ preserves the volume (in terms of Lebesgue measure) in phase space, or equivalently, the divergence of its vector field $X_H$ equals zero. This property can be visualized in the examples we discussed. Since they are linear, their phase space is $\mathbb{R}^2$ (the phase portrait). If we take any nonempty set $U \subset \mathbb{R}^2$, we have $\mu(U) = \mu(\phi^t(U))$, for all $t \in \mathbb{R}$.

\bibliographystyle{plain}
\bibliography{referencias}

\end{document}